\documentclass{amsart} 
\author{Andy Hammerlindl}
\title{Polynomial Global Product Structure}

\usepackage{amsmath}
\usepackage{amssymb}
\usepackage{amsfonts}
\usepackage{amsthm}
\usepackage{graphicx}

\newcommand{\FF}{\mathcal{F}}
\newcommand{\GG}{\mathcal{G}}
\newcommand{\WW}{\mathcal{W}}
\newcommand{\Es}{E^s}
\newcommand{\Ec}{E^c}
\newcommand{\Eu}{E^u}

\newcommand{\Ws}{W^s}
\newcommand{\Wc}{W^c}
\newcommand{\Wu}{W^u}
\newcommand{\Wcu}{W^{cu}}
\newcommand{\Wcs}{W^{cs}}

\newcommand{\volume}{\operatorname{volume}}

\newcommand{\gps}{Global Product Structure}
\newcommand{\pgps}{polynomial \gps}
\newcommand{\lgps}{linear \gps}

\newtheorem{thm}{Theorem}[section]
\newtheorem{cor}[thm]{Corollary}
\newtheorem{lemma}[thm]{Lemma}
\newtheorem{prop}[thm]{Proposition}

\theoremstyle{remark}

\newtheorem*{defn} {\bf Definition}

\begin{document}

\begin{abstract}
    An Anosov diffeomorphism is topologically conjugate to an infranilmanifold
    automorphism if and only if it has \pgps.
\end{abstract}
\maketitle

\section{Introduction} 

A famous conjecture states that every Anosov diffeomorphism is topologically
conjugate to an algebraic example, specifically an infranilmanifold
automorphism.
In classification results toward this end \cite{brin1981anosov}
\cite{Manning}, a key step is establishing \gps.

Foliations $\FF$ and $\GG$ have \emph{\gps} if every leaf $\FF(x)$ intersects
every leaf $\GG(y)$ in a unique point $[x,y]$.
An Anosov diffeomorphism has \gps\ if its stable and unstable foliations have
\gps\ on the universal cover.
While it is not clear if \gps\ alone is enough to give a classification, the
following stronger condition is.

Foliations $\FF$ and $\GG$ have \emph{\pgps} if there is a polynomial $p$ such
that $d_\FF(x,[x,y]) + d_\GG(y,[x,y]) < p(d(x,y))$
for all $x$ and $y$.
Here, $d_\FF$ and $d_\GG$ are distances measured along the leaves, and $d$ is
distance on the ambient manifold.

\begin{thm} \label{polyiff}
    An Anosov diffeomorphism is topologically conjugate to an infranilmanifold
    automorphism if and only if it has \pgps.
\end{thm}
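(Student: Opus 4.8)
The plan is to prove the two implications separately. For ``conjugate to an infranilmanifold automorphism $\Rightarrow$ pGPS'' I would first check directly that an infranilmanifold automorphism $A$ has pGPS: on the universal cover — a simply connected nilpotent Lie group $N$ with a left-invariant metric — the stable and unstable foliations of $A$ are the coset foliations of the contracting and expanding subgroups, and in exponential coordinates the left-invariant metric and the product coordinates are comparable up to a polynomial by the standard ball-box estimate for nilpotent groups, which is exactly pGPS. Then I would argue that pGPS is preserved under topological conjugacy of Anosov diffeomorphisms: a conjugacy lifts to a $\pi_1(M)$-equivariant homeomorphism $\tilde h$ of the universal cover that carries stable and unstable leaves of one system to those of the other, and $\tilde h$ lies at bounded distance from the quasi-isometry induced by the corresponding automorphism of $\pi_1(M)$; since $\pi_1(M)$ is virtually nilpotent in this situation, that automorphism distorts the ambient metric — and, with some care, the leafwise metrics — at most polynomially, so the estimate for $A$ transfers.

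For the converse, assume $f$ has pGPS. This direction reduces to showing that $\pi_1(M)$ has polynomial growth. Once that is known, Gromov's polynomial growth theorem makes $\pi_1(M)$ virtually nilpotent; since pGPS implies global product structure, the map $z \mapsto (\Wu(z)\cap\Ws(x_0),\,\Ws(z)\cap\Wu(x_0))$ is a homeomorphism of the universal cover onto $\Wu(x_0)\times\Ws(x_0)$, and as each global stable or unstable leaf is diffeomorphic to a Euclidean space the universal cover is $\mathbb{R}^{\dim M}$, so $M$ is closed and aspherical; a closed aspherical manifold with virtually nilpotent fundamental group is an infranilmanifold (Malcev's embedding together with topological rigidity of infranilmanifolds); and by Manning's theorem \cite{Manning} an Anosov diffeomorphism of an infranilmanifold is topologically conjugate to an automorphism. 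Note that pGPS enters \emph{only} through the growth statement: the asphericity uses bare global product structure, and the leaf-volume estimate below holds for every Anosov diffeomorphism.

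To prove $\pi_1(M)$ has polynomial growth I would first record that the stable and unstable leaves have polynomial volume growth: for an unstable leaf-ball $B_L(x,R)$, iterating backwards contracts $L$ uniformly, so for $n \asymp \log R$ the set $\tilde f^{-n}B_L(x,R)$ lies in a leaf-ball of bounded radius, hence bounded volume, while iterating forward expands unstable leaf-volume by at most an $n$th power of a constant; since $n \asymp \log R$ this gives $\operatorname{vol}_L B_L(x,R) \le \operatorname{poly}(R)$ with uniform constants, and symmetrically for stable leaves. Now fix $x_0$ in the universal cover and set $N(R) = \#\{\gamma \in \pi_1(M) : d(\gamma x_0, x_0)\le R\}$. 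For $\gamma x_0 \in B(x_0,R)$, pGPS supplies $q_\gamma = \Wu(\gamma x_0)\cap\Ws(x_0)$ with $q_\gamma \in B_\Ws(x_0,p(R))$ and $d_\Wu(\gamma x_0,q_\gamma) < p(R)$; the number of $\gamma$ sharing a given $q_\gamma$ is then at most the number of (uniformly discrete) orbit points in an unstable leaf-ball of radius $p(R)$, which is $\operatorname{poly}(R)$ by the leaf-volume bound, so $N(R) \le \operatorname{poly}(R)\cdot\#\{q_\gamma\}$. It remains to bound $\#\{q_\gamma\}$, a set of points in the stable leaf-ball $B_\Ws(x_0,p(R))$, and for this I would renormalize: push forward by $\tilde f^n$ with $n \asymp \log R$ so that stable leafwise distances are contracted to bounded size, use the leaf-volume bound in the (now expanded) unstable direction, and invoke pGPS once more at this renormalized scale to absorb the error introduced by the expansion.

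The main obstacle is precisely this last estimate. Bounding $\#\{q_\gamma\}$ — equivalently, the number of group elements in a metric ball of the universal cover — resists the obvious volume comparison: $B(x_0,R)$ sits inside the ``rectangle'' $\bigcup_{q\in B_\Ws(x_0,p(R))} B_{\Wu(q)}(q,p(R))$, but this rectangle has diameter only $O(p(R))$, so its volume is comparable to that of a slightly larger ball and the argument is circular; and one cannot cover the rectangle by boundedly many bounded-diameter pieces because the holonomy of the unstable foliation along the stable foliation can distort the geometry exponentially over long leafwise distances. The renormalization is designed to trade those long stable distances for bounded ones, and this is where pGPS — not merely global product structure — is used essentially; making the trade precise, in particular re-applying pGPS at the renormalized scale to control the polynomially-grown error, is the technical heart of the argument. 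Everything after that is bookkeeping together with the cited rigidity results, and the reverse implication is routine.
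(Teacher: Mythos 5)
Your architecture for the hard direction matches the paper's: use \pgps\ to show the deck group has polynomial growth, invoke Gromov to get virtual nilpotence, and finish with the Franks--Manning--Brin machinery; your easy direction (ball-box estimates on the nilpotent cover, then transfer across the lifted conjugacy, which distorts ambient and leafwise metrics only affinely) is also the paper's intended ``exercise'' --- its hint lemma on rewriting $s_1u_1\cdots s_nu_n$ in a nilpotent group is the combinatorial form of your ball-box comparison. One inefficiency in your endgame: you do not need the Borel-conjecture-type assertion that a closed aspherical manifold with virtually nilpotent fundamental group is an infranilmanifold. The Brin--Manning theorem quoted in Section \ref{ansec} converts ``\gps\ plus polynomial growth of the universal cover'' directly into the conjugacy by Franks' method; the topological-rigidity detour is much heavier than necessary and delicate in low dimensions.

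The genuine gap is the one you flag yourself: bounding the number of projections $q_\gamma$ in the stable leaf-ball, equivalently the volume of $B(x_0,R)$ in $\tilde M$. These points need not be uniformly separated along $\Ws(x_0)$, and the diameter-based volume comparison is circular, exactly as you observe; so the step you call ``the technical heart'' is asserted, not proved. The paper isolates the missing estimate as a standalone property of the foliation pair, \emph{polynomial bounds on rectangles}: for an $R$-rectangle, holonomy over leafwise distance $R$ expands a transversal curve of length $1/p(R)$ to length at most $1$. Its proof is precisely the renormalization you gesture at --- apply $f^{\pm n}$ with $n\asymp\log R$ to shrink the long sides of the rectangle to bounded size, use uniform local product structure on the compact quotient there, and pull back --- and, importantly, it holds for \emph{every} Anosov diffeomorphism with no \pgps\ hypothesis; \pgps\ enters only to guarantee that $B(x_0,R)$ sits inside a rectangle whose side lengths are polynomial in $R$. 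With the holonomy thus controlled, the rectangle's volume is bounded by a product of stable and unstable leaf volumes (each polynomial by the uniform contraction/expansion argument you already give) times a polynomial holonomy factor; this is Proposition \ref{grow} and Corollary \ref{corpoly}. So your renormalization idea is the correct one, but until the rectangle estimate is actually established and then fed into the volume computation, the central implication ``\pgps\ $\Rightarrow$ $\pi_1(M)$ has polynomial growth'' remains open in your write-up.
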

In particular, all known examples of Anosov systems have \pgps.
All of the ingredients needed to prove the forward direction appear in the
work of Brin and Manning \cite{brin1978fundamental}\cite{brin1981anosov}, and
the converse direction is a routine exercise in Lie group theory.  However,
the result seems to have gone unnoticed to date.

We give a specific application of the result.
A foliation $\FF$ is \emph{quasi-isometric} if, after lifting to the universal
cover, there is $Q > 1$ such that
$d_\FF(x,y)  \le  Q d(x,y) + Q$
for all $x$ and $y$ on the same lifted leaf.

\begin{thm} \label{qianosov}
    An Anosov diffeomorphism with quasi-isometric stable and unstable
    foliations is topologically conjugate to an infranilmanifold
    automorphism.
\end{thm}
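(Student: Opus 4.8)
The plan is to deduce Theorem~\ref{qianosov} from Theorem~\ref{polyiff} by showing that an Anosov diffeomorphism $f$ with quasi-isometric stable and unstable foliations automatically has \pgps. Lift to the universal cover $\widetilde M$, retaining the notation $\Ws,\Wu$ for the lifted foliations, fix an adapted metric so that $f$ contracts $\Ws$-lengths by a factor $\mu<1$ and expands $\Wu$-lengths by a factor $\lambda>1$, and let $Q$ be a common quasi-isometry constant for the two foliations. Compactness of $M$ gives $\epsilon_0>0$ and $C_0\ge1$ such that $d(p,q)<\epsilon_0$ implies that $[p,q]:=\Ws(p)\cap\Wu(q)$ is a single point inside the $\epsilon_0$-balls about $p$ and $q$, with $d_{\Ws}(p,[p,q])+d_{\Wu}(q,[p,q])\le C_0\,d(p,q)$. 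It remains to upgrade this to global product structure and to a \emph{linear} bound on $d_{\Ws}(x,[x,y])+d_{\Wu}(y,[x,y])$ in terms of $d(x,y)$.

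Uniqueness is where quasi-isometry enters cheaply. If $z\ne z'$ both belonged to $\Ws(x)\cap\Wu(y)$, then $d(f^nz,f^nz')\le d_{\Ws}(f^nz,f^nz')\le\mu^n d_{\Ws}(z,z')\to0$, whereas $d_{\Wu}(f^nz,f^nz')\ge\lambda^n d_{\Wu}(z,z')\to\infty$. Since $f^nz$ and $f^nz'$ lie on the single unstable leaf $\Wu(f^ny)$, quasi-isometry forces $d_{\Wu}(f^nz,f^nz')\le Q\,d(f^nz,f^nz')+Q$, which stays bounded --- a contradiction. So any two leaves $\Ws(x)$, $\Wu(y)$ meet in at most one point $[x,y]$.

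For existence together with the linear bound --- the substantive part --- the plan is to march a product point along a geodesic. Given $x,y$ with $r:=d(x,y)$, take a geodesic $x=p_0,p_1,\dots,p_N=y$ with $d(p_i,p_{i+1})<\epsilon_0$ and $N<r/\epsilon_0+1$. Starting from $z_0=x$, suppose inductively that a point $z_i\in\Ws(x)\cap\Wu(p_i)$ has been produced; if $\Wu(p_{i+1})$ passes within $\epsilon_0$ of $z_i$, then one local-product-structure step produces $z_{i+1}\in\Ws(x)\cap\Wu(p_{i+1})$ with $d_{\Ws}(z_i,z_{i+1})\le C_0\epsilon_0$. Carried through to $i=N$ this gives $z_N=[x,y]$, hence existence, and $d_{\Ws}(x,[x,y])=\sum_i d_{\Ws}(z_i,z_{i+1})\le C_0\epsilon_0 N=O(r+1)$; then $d(y,[x,y])\le r+d_{\Ws}(x,[x,y])=O(r+1)$, and quasi-isometry of $\Wu$ turns this into $d_{\Wu}(y,[x,y])=O(r+1)$ as well. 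This is \pgps\ with a linear polynomial, so Theorem~\ref{polyiff} applies and $f$ is topologically conjugate to an infranilmanifold automorphism.

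The one unjustified step --- and the main obstacle --- is the claim that $\Wu(p_{i+1})$ returns to within $\epsilon_0$ of $z_i$. Since $z_i$ lies on $\Wu(p_i)$ but possibly enormously far along it from $p_i$, this requires that two unstable leaves with $\epsilon_0$-close basepoints never drift more than $O(\epsilon_0)$ apart, however far one travels. For a general foliation such drift can grow exponentially, so this is a genuine difficulty, and it is precisely here that quasi-isometry --- together with the $f$-invariance of the splitting --- must be used: one expects these hypotheses to force the leaves of $\Ws$ and of $\Wu$ to be uniformly coarsely flat, each staying within bounded distance of a flat subspace with complementary transverse directions, which controls the drift and from which the global product structure and the linear estimate both follow. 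Making this coarse-flatness statement precise --- the same circle of ideas underlying the classification results of Brin and Manning --- is the heart of the matter; everything else is soft.
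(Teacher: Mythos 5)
Your overall route is the same as the paper's: reduce the theorem to showing that quasi-isometry of the stable and unstable foliations forces \lgps\ (hence \pgps), and then invoke Theorem~\ref{polyiff}. The paper accomplishes the reduction in one line by citing Proposition~\ref{qil}, whose proof lives in \cite{ham-conseq}; you instead try to prove that proposition from scratch, and you do not succeed. Your uniqueness argument is correct and is indeed where quasi-isometry enters ``cheaply'': two intersection points on a common unstable leaf would have unbounded leafwise distance under forward iteration but bounded ambient distance, contradicting $d_{\Wu}\le Q\,d+Q$. But the existence of $[x,y]$ together with the linear estimate is the entire content of the result, and your geodesic-marching induction breaks down at exactly the step you flag: there is no a priori reason that $\Wu(p_{i+1})$ passes within $\epsilon_0$ of $z_i$, since $z_i$ may lie arbitrarily far along $\Wu(p_i)$ from $p_i$, and holonomy between nearby unstable leaves can a priori diverge. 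Declaring that quasi-isometry ``must'' yield a coarse-flatness property that controls this drift is a statement of what needs to be proved, not a proof; as written, the argument establishes only local product structure plus uniqueness, which every Anosov diffeomorphism already has.

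Concretely, then, the gap is Proposition~\ref{qil} itself. The actual argument (in \cite{ham-conseq}) does not march along a geodesic in the ambient manifold; it exploits the interplay between the dynamics and the quasi-isometry constant globally --- roughly, one shows that if a stable and an unstable leaf failed to intersect, or intersected only after travelling super-linearly far, then iterating forward or backward would produce points whose leafwise and ambient distances violate the bound $d_\FF \le Q\,d + Q$, because the leafwise distance grows or shrinks at a uniform exponential rate while the ambient distance is controlled. Until you supply an argument of this kind (or explicitly cite the proposition), the proof is incomplete: you have reduced the theorem to its hard step and left the hard step open.
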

The same analysis applies to partially hyperbolic systems and we give
several results, including the following.

\begin{thm} \label{qithree}
    For a (strongly) partially hyperbolic diffeomorphism on a 3-manifold $M$,
    the stable, center, and unstable foliations exist and are quasi-isometric
    if and only if
    $M$ is finitely covered by the 3-torus.
\end{thm}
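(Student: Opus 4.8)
The plan is to reduce the forward implication, which is the substantive one, to the volume-growth argument of Brin and Manning that already powers the forward direction of Theorem~\ref{polyiff}, and to read off the converse from the known classification of dynamically coherent partially hyperbolic diffeomorphisms on $\bbT^3$. For the forward direction, suppose $f$ is strongly partially hyperbolic on $M$, the center foliation $\Wc$ exists, and $\Ws$, $\Wc$, $\Wu$ are all quasi-isometric. After passing to a finite cover to orient the bundles if necessary, one first argues dynamical coherence: the branching center-stable and center-unstable foliations of Burago and Ivanov cannot branch once $\Wc$ exists and the invariant foliations are quasi-isometric, so they are honest foliations $\Wcs$, $\Wcu$; moreover, the branching-foliation machinery of Burago and Ivanov shows that the universal cover $\widetilde M$ is homeomorphic to $\bbR^3$. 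The next step is to upgrade quasi-isometry of $\Ws$ and $\Wc$ to quasi-isometry of $\Wcs$, and symmetrically for $\Wcu$: inside a center-stable leaf $L\cong\bbR^2$ the transverse line foliations $\widetilde\Ws|_L$ and $\widetilde\Wc|_L$ are each quasi-isometric, hence have a global product structure on $L$ with a bounded-distortion bound on the product point, and concatenating the two leafwise paths gives $d_{\widetilde\Wcs}(x,y)\le Q'd(x,y)+Q'$.

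Now $\widetilde\Ws$ and $\widetilde\Wcu$ are transverse quasi-isometric foliations of $\bbR^3$ of complementary dimensions $1$ and $2$, both invariant under the lift $\tilde f$, which uniformly contracts $\widetilde\Ws$. As in the proof of the forward direction of Theorem~\ref{polyiff}, quasi-isometry forces this pair to have a global product structure on $\bbR^3$; the Brin--Manning volume estimate \cite{brin1978fundamental} then applies --- it uses only such a product structure together with the compactness of $M$, not the Anosov property --- and shows that $\pi_1(M)$, which acts cocompactly on $\bbR^3$ preserving the product structure, has polynomial growth. By Gromov's theorem $\pi_1(M)$ is virtually nilpotent, and since it is torsion free and acts freely and cocompactly on $\bbR^3$ it contains a finite-index subgroup that is a lattice in a simply connected nilpotent Lie group of dimension three; the only such groups are $\bbR^3$ and the Heisenberg group $\Heis$. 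In the first case that subgroup is $\bbZ^3$, so $M$ is finitely covered by $\bbT^3$, as desired. In the second case $M$ would be finitely covered by a Heisenberg nilmanifold $\Heis/N$, and the quasi-isometry of $\Wc$ rules this out: by the classification of partially hyperbolic diffeomorphisms on such manifolds, the center leaves lifted to $\Heis$ are bounded Hausdorff distance from the cosets of the center $Z(\Heis)$, but $Z(\Heis)$ is quadratically distorted in $\Heis$, so along a center leaf one would have $d_{\Wc}(x,y)$ growing like $d(x,y)^2$, contradicting $d_{\Wc}(x,y)\le Q\,d(x,y)+Q$.

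For the converse, suppose $M$ is finitely covered by $\bbT^3$ and let $f$ be strongly partially hyperbolic on $M$ with center foliation $\Wc$. Since the translation lattice is characteristic in a Bieberbach group, $f$ lifts to the finite cover $\bbT^3$, where the theory of dynamically coherent partially hyperbolic diffeomorphisms of the three-torus applies: $f$ is leaf conjugate to a linear partially hyperbolic automorphism, and its lifted stable, center, and unstable foliations are uniformly quasi-geodesic. As quasi-isometry of a foliation depends only on its lift to the universal cover, which $\bbT^3$ and $M$ share, the three foliations of $f$ on $M$ are quasi-isometric. The phenomenon is already visible for a hyperbolic automorphism of $\bbT^3$ with eigenvalues $\lambda_1<1<\lambda_2<\lambda_3$, whose stable, center, and unstable foliations lift to three affine line foliations of $\bbR^3$.

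The step I expect to be the main obstacle is, in the forward direction, deducing a global product structure for $\widetilde\Ws$ and $\widetilde\Wcu$ from quasi-isometry in the partially hyperbolic setting, where $\tilde f$ is only neutral-or-expanding along $\widetilde\Wcu$ rather than uniformly expanding, together with the preliminary passage of quasi-isometry to the weak foliations and the verification that the branching center foliations do not branch. Once polynomial growth of $\pi_1(M)$ has been established, Gromov's theorem, the classification of torsion-free virtually nilpotent three-manifold groups, and the distortion computation excluding the Heisenberg nilmanifold are routine.
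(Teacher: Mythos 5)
Your converse and your endgame match the paper in spirit: the paper also lifts to $\bbT^3$ via Bieberbach theory and quotes the known quasi-isometry of the invariant foliations there, and in the forward direction it also passes from polynomial growth of $\pi_1(M)$ to a short list of candidate manifolds and then excludes the non-torus ones by the distortion of the center. (The paper does this by citing Parwani's result that such an $M$ is finitely covered by a circle bundle over a torus and the classification of center foliations on those manifolds; your Gromov--Malcev--Heisenberg-distortion route is a reasonable alternative, modulo some 3-manifold topology to recognize the finite cover as $\bbT^3$.)

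The genuine gap is the step you yourself flag as the ``main obstacle,'' and in particular the assertion that the Brin--Manning volume estimate ``uses only such a product structure together with the compactness of $M$.'' That is not what Proposition \ref{grow} requires: it needs \emph{polynomial} \gps, \emph{polynomial bounds on rectangles}, and \emph{polynomial growth of volume of the leaves}, and in the partially hyperbolic setting none of the last two is free for the pair $(\Ws,\Wcu)$. First, bounding the distortion of holonomy along rectangles in the direction that is not uniformly expanded is exactly Lemma \ref{weakrec}, whose proof uses \emph{linear} \gps\ to choose an iterate $n$ proportional to $\log R$; mere existence of a product structure, or even \pgps, does not obviously suffice there, which is why the paper derives \emph{linear} \gps\ from quasi-isometry (the Proposition quoted from \cite{ham-conseq}) rather than just \gps. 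Second, polynomial volume growth of the two-dimensional $\Wcu$-leaves is not automatic: it has to be assembled from the product structure of $\Wc$ and $\Wu$ inside $\Wcu$ together with polynomial growth of $\Wc$, and the latter holds here only because the center is one-dimensional --- the paper's proof is precisely the one-line application of Corollary \ref{phbound} after observing that one-dimensional foliations always have polynomial growth of volume. Your proposal never closes this quantitative loop, so the crucial conclusion that $\widetilde M$ has polynomial volume growth is asserted rather than proved. The dynamical coherence discussion via Burago--Ivanov is also unnecessary, since the existence of $\Wcs$ and $\Wcu$ (and the product structure) is part of what the quoted quasi-isometry proposition delivers.
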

\section{Anosov systems} \label{ansec} 

\begin{defn}
    Consider a diffeomorphism $f$ on a compact Riemannian manifold $M$
    with a $Tf$-invariant splitting
    $TM = \Es \oplus \Ec \oplus \Eu$
    and constants $\sigma < 1 < \mu$ such that
    \[
        \|Tf v^s\| < \sigma < \|Tf v^c\| < \mu < \|Tf v^u\|
    \]
    for all unit vectors $v^s \in \Es$, $v^c \in \Ec$, and $v^u \in \Eu$.

    \begin{itemize}
        \item
        If $\Ec$ is the zero bundle, then $f$ is called \emph{Anosov}.
        \item
        If exactly one of $\Es$ and $\Eu$ is the zero bundle, \\
        then $f$ is called \emph{weakly partially hyperbolic}.
        \item
        If all three subbundles are non-zero, \\
        then $f$ is called \emph{strongly partially hyperbolic}.
    \end{itemize}  \end{defn}
We first give two polynomial estimates which hold for all Anosov systems.

\begin{defn}
    For an ordered pair of foliations $(\FF,\GG)$, an \emph{$R$-rectangle} is a
    continuous map $\phi:[0,1]^2 \to M$ such each $\phi(\cdot, t)$ lies in a leaf
    of $\FF$, each $\phi(s, \cdot)$ lies in a leaf of $\GG$, and
    \[
        \sup \bigl\{ d_\GG( \phi(s, t_1), \phi(s, t_2) ) : s,t_1,t_2 \in [0,1] \bigr\} < R.
    \]
    The pair $(\FF,\GG)$ has \emph{polynomial bounds on rectangles} if there is a
    polynomial $p$ such that
    \[
        d_\FF(\phi(0,0),\phi(1,0)) < \frac{1}{p(R)}  \quad \Rightarrow \quad 
        d_\FF(\phi(0,1),\phi(1,1)) < 1
    \]
    for every $R$-rectangle $\phi$.
    We simply write that $\FF$ and $\GG$ have polynomial bounds on rectangles,
    if both $(\FF,\GG)$ and $(\GG,\FF)$ satisfy the property.
\end{defn}
The following observation is at the core of the proof in \cite{Brin-nw}.

\begin{lemma}
    For an Anosov diffeomorphism, the stable and unstable foliations, lifted
    to the universal cover, have
    polynomial bounds on rectangles.
          \end{lemma}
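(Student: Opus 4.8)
The plan is to reduce, using the hyperbolicity of $f$, to a local version of the claim concerning rectangles whose $\GG$-sides are shorter than a fixed local product structure constant: the $O(\log R)$ iterates of $f$ needed for this reduction each distort leaf distances by a uniformly bounded factor, and repackaging the resulting $(\mathrm{const})^{O(\log R)}$ as a polynomial in $R$ is what produces the polynomial $p$ in the conclusion. I argue the ordered pair $(\Ws,\Wu)$; the pair $(\Wu,\Ws)$ is the same statement after replacing $f$ by $f\inv$. Everything takes place on the universal cover $\widetilde M$, to which $f$ and both foliations are lifted, and which I equip with an adapted metric so that $\|Tf v^s\|\le\sigma<1$ and $\|Tf\inv v^u\|\le\mu\inv<1$ for all unit $v^s\in\Es$ and $v^u\in\Eu$, and $\|Tf^{\pm1}\|\le\Lambda$. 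I also fix $\epsilon_0>0$ below a local product structure constant: whenever $d(p,q)<\epsilon_0$, the plaques $\Ws_{\epsilon_0}(p)$ and $\Wu_{\epsilon_0}(q)$ meet in a single point depending continuously on $(p,q)$, and the holonomy between nearby local stable plaques obtained by sliding along unstable leaves has a modulus of continuity $\omega$, with $\omega(\delta)\to0$ as $\delta\to0$, uniform over $\widetilde M$. Finally, I record that every leaf of $\Ws$ and of $\Wu$ is simply connected.

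The crux is the following local lemma: if $\psi$ is a rectangle for $(\Ws,\Wu)$ all of whose $\Wu$-sides have length at most some $\delta_1$ that is small relative to $\epsilon_0$, and with $d_{\Ws}(\psi(0,0),\psi(1,0))<\delta_1$, then $d_{\Ws}(\psi(0,1),\psi(1,1))<\omega(\delta_1)$. To prove it I would slide each point $\psi(s,0)$ along its unstable leaf onto the top leaf $L_1=\Ws(\psi(0,1))$, continued from $\psi(0,1)$ at $s=0$, which gives a continuous map $H\colon[0,1]\to L_1$ (the continuation never leaves the local regime, since we will see $H$ coincides with $\psi(\cdot,1)$, whose image lies within $\delta_1$ of the bottom edge). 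A connectedness argument --- the set of $s$ with $H(s)=\psi(s,1)$ is nonempty and, by uniqueness of local intersections, both open and closed --- shows $H\equiv\psi(\cdot,1)$; this is precisely the step that forbids $\psi(1,1)$ from being a spurious distant intersection of the unstable plaque through $\psi(1,0)$ with the leaf $L_1$. Since $L_0=\Ws(\psi(0,0))$ is simply connected, the bottom edge $s\mapsto\psi(s,0)$ is homotopic rel endpoints to a geodesic of length below $\delta_1$ in $L_0$, and holonomy depends only on the homotopy class, so $\psi(1,1)=H(1)$ is the local product point determined by $\psi(0,1)$ and $\psi(1,0)$, which lies within $d_{\Ws}$-distance $\omega(\delta_1)$ of $\psi(0,1)$ by uniform continuity of the holonomy.

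Granting the local lemma, the global estimate is routine. Fix $\delta_1$ small enough that $\omega(\delta_1)<1$. Given an $R$-rectangle $\phi$ for $(\Ws,\Wu)$, set $n=\max\{0,\lceil\log_\mu(R/\delta_1)\rceil\}=O(\log R)$ and $\psi=f^{-n}\circ\phi$. Then every $\Wu$-side of $\psi$ has length at most $\mu^{-n}R\le\delta_1$, while $d_{\Ws}(\psi(0,0),\psi(1,0))\le\Lambda^{n}d_{\Ws}(\phi(0,0),\phi(1,0))$. Set $p(R)=\Lambda^{n}/\delta_1$; since $\Lambda^{n}=\Lambda^{O(\log R)}$ is bounded above by a polynomial in $R$, we may take $p$ to be a polynomial. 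If $d_{\Ws}(\phi(0,0),\phi(1,0))<1/p(R)$, then $d_{\Ws}(\psi(0,0),\psi(1,0))<\delta_1$, the local lemma yields $d_{\Ws}(\psi(0,1),\psi(1,1))<\omega(\delta_1)$, and applying $f^{n}$, which contracts stable leaf distance, gives
\[
    d_{\Ws}(\phi(0,1),\phi(1,1))\ \le\ \sigma^{n}\omega(\delta_1)\ <\ 1 ,
\]
as required.

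I expect the local lemma to be the main obstacle, and within it the coordination of two point-set subtleties: since leaves need not be properly embedded, distant intersections of a plaque with a leaf must be excluded, and the only available tool is the continuity of the rectangle over the connected square, used through the open-and-closed argument; and since the bottom edge may be long and wandering while its endpoints stay close, one needs the simple connectivity of leaves to replace it by a short geodesic, keeping the relevant holonomy inside the local product regime where $\omega$ controls it. The remaining ingredients --- the adapted metric, the distortion estimates under $f^{\pm n}$, and the passage from $(\mathrm{const})^{O(\log R)}$ to a polynomial --- are standard.
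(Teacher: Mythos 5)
The paper does not actually prove this lemma---it defers to Brin's work---so I can only judge your argument on its own terms. Your global reduction is the standard and surely the intended one: pull back by $f^{-n}$ with $n$ of order $\log R$ so that the unstable sides drop below the local product structure scale, observe that the bottom stable distance grows by at most $\Lambda^{n}=\Lambda^{O(\log R)}$, which is polynomial in $R$, prove a local statement, and push forward again using that $f^{n}$ contracts stable leaf distances. That bookkeeping, and the reduction of $(\Wu,\Ws)$ to $(\Ws,\Wu)$ via $f\inv$, are all fine.

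The gap is in the local lemma, at the sentence ``holonomy depends only on the homotopy class.'' Homotopy invariance of foliation holonomy is a statement about \emph{germs} at the base leaf: it guarantees that the continuations along $b=\psi(\cdot,0)$ and along the short geodesic $\gamma$ agree on \emph{some} neighborhood of $\psi(0,0)$ in the transversal, of a size depending on the homotopy. To conclude that they agree at the specific point $\psi(0,1)$, which sits at unstable distance up to $\delta_1$ from $L_0$, you need the continuation of $\psi(0,1)$ to be defined---that is, to stay within the local product scale $\epsilon_0$ of the base path---along \emph{every} intermediate path of the homotopy. The rectangle hands you this control only along $b$ itself (that is exactly what the hypothesis on the unstable sides says) and along $\gamma$ (because $\gamma$ is short); along the intermediate paths nothing prevents the continued point from drifting past $\epsilon_0$, after which the continuation ceases to exist and the homotopy argument collapses. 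Equivalently: subdividing $b$ into sub-arcs of leaf-diameter $<\delta_1$ shows each sub-arc moves the top point by at most $\omega(2\delta_1)$ inside $L_1$, but these increments do not telescope and their number is uncontrolled, so you cannot yet exclude that $\psi(1,1)$ is a distant return of $L_1$ to $\Wu_{\epsilon_0}(\psi(1,0))$ rather than the local product point. To close this you would need one of: an a priori bound on the unstable displacement over the whole homotopy; a uniqueness statement for intersections of a stable leaf with a small unstable plaque on the universal cover (which is essentially the uniqueness half of \gps\ and cannot be assumed here); or a strengthening of the local hypothesis to bound the leaf-diameter of the entire bottom edge rather than just the leaf distance between its endpoints (which is what the rectangles arising in the volume-comparison application actually satisfy, but is not what the paper's definition of an $R$-rectangle provides). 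None of these is supplied, and this is precisely the point you yourself flag as the main obstacle.
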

\begin{defn}
    A Riemannian manifold $\tilde M$ has \emph{polynomial growth of volume}, if
    there is a polynomial $p$ such that
    $ \volume ( B(x, R) ) < p(R) $
    for all $x \in \tilde M$ and $R > 0$.  Here $B(x, R)$ is the set of all
    points at a distance less than $R$ from $x$.

    \noindent A foliation $\FF$ has polynomial growth of volume if there is a
    polynomial $p$ such that
    $ \volume ( \FF(x, R) ) < p(R) $
    for all $x \in \tilde M$ and $R > 0$, and where $\FF(x, R)$ denotes all
    points reachable from $x$ by a path in $\FF(x)$ of length less than $R$.
    (That is, each leaf has polynomial growth, and the polynomial is
    independent of the leaf.)
\end{defn}
If a map $f:M \to M$ is uniformly expanding, the universal cover $\tilde M$ must
have polynomial growth of volume \cite{shub1970expanding}.  The same applies to
the uniform contraction or expansion inside a stable or unstable foliation.

\begin{lemma}
    In an Anosov (or partially hyperbolic) diffeomorphism, the stable and
    unstable foliations have polynomial growth of volume.
\end{lemma}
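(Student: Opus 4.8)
The plan is to adapt, leaf by leaf, the classical argument that an expanding map forces polynomial volume growth of its universal cover, using the ambient dynamics of $f$ in place of a (nonexistent) self-map of a single leaf. It suffices to treat the unstable foliation $\Wu$, since $\Ws$ is the unstable foliation of $f\inv$, which is again Anosov (resp.\ partially hyperbolic). Lift $f$ to a homeomorphism $\tilde f$ of $\tilde M$; it permutes the leaves of the lifted foliation $\Wu$, and since $\|Tf\,v^u\| > \mu\|v^u\|$ is a local condition, integrating it along paths inside leaves gives $d_{\Wu}(\tilde f\inv a,\tilde f\inv b) \le \mu\inv d_{\Wu}(a,b)$ whenever $a$ and $b$ lie on a common unstable leaf. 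Hence $\tilde f\inv(\Wu(x,R)) \subseteq \Wu(\tilde f\inv x,\mu\inv R)$ and, by iteration,
\[
    \tilde f^{-n}\bigl(\Wu(x,R)\bigr) \ \subseteq\ \Wu\bigl(\tilde f^{-n} x,\ \mu^{-n} R\bigr).
\]

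Next I would record the two uniform constants supplied by compactness of $M$. First, there are $\epsilon_0 > 0$ and $V_0 < \infty$ with $\volume(\Wu(y,\epsilon_0)) \le V_0$ for every $y \in \tilde M$: a sufficiently small intrinsic ball in an unstable leaf is a nearly flat graph over a Euclidean $\epsilon_0$-ball in $\Eu$, with slope bounded uniformly by continuity of $\Eu$ and compactness of $M$. Second, with $\Lambda := \max_{x}\|T_x f\| < \infty$ and $u := \dim\Eu$, submultiplicativity of the Jacobian gives $\operatorname{Jac}\bigl(T\tilde f^{\,n}|_{\Eu}\bigr) \le \Lambda^{un}$ at every point, so $\volume(\tilde f^{\,n}(A)) \le \Lambda^{un}\,\volume(A)$ for every measurable $A$ inside a leaf. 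Given $x$ and $R$, choose $n \approx \log_\mu(R/\epsilon_0)$ so that $\mu^{-n} R \le \epsilon_0$; then, using $\Wu(x,R) = \tilde f^{\,n}(\tilde f^{-n}(\Wu(x,R)))$ together with the displayed inclusion and the distortion bound,
\[
    \volume\bigl(\Wu(x,R)\bigr) \ \le\ \Lambda^{un}\,\volume\bigl(\Wu(\tilde f^{-n}x,\ \mu^{-n}R)\bigr) \ \le\ \Lambda^{un}\,V_0 \ \le\ C\,(1+R)^{\,u\log\Lambda/\log\mu},
\]
which is bounded by a polynomial in $R$ independent of the leaf. For partially hyperbolic $f$ the only extra ingredient is the classical existence and unique integrability of the strong stable and unstable bundles, after which the argument is verbatim.

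I expect the one genuine point of care — rather than a true obstacle — to be the uniformity of all the constants over the (noncompact) leaves: the cited result of Shub is stated for expanding maps of a compact manifold, and here the relevant spaces are leaves, so one cannot invoke it directly but must rerun the contraction estimate by hand, checking that $\epsilon_0$, $V_0$, $\Lambda$ and the rate $\mu$ do not depend on the leaf. Each of these reduces to compactness of $M$ (for the bound upstairs one also uses that $M$ has positive systole, so that small intrinsic leaf balls lift isometrically from $M$).
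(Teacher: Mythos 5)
Your argument is correct and is exactly the route the paper intends: it invokes Shub's expanding-map volume estimate and notes that ``the same applies'' to the uniform expansion inside unstable leaves, which is precisely the pull-back-to-a-small-plaque, push-forward-with-a-Jacobian-bound computation you carry out. The uniformity points you flag (leaf-independent $\epsilon_0$, $V_0$, $\Lambda$, $\mu$ via compactness of $M$, and reduction of $\Ws$ to the unstable foliation of $f\inv$) are the right ones and are handled correctly.
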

Brin showed that for an Anosov diffeomorphism with ``pinched'' spectrum, the
supporting manifold has a universal cover with polynomial growth of volume
\cite{brin1978fundamental}.  The proof relies on the following.

\begin{prop} \label{grow}
    Suppose $\FF$ and $\GG$ are $C^0$ foliations with $C^1$ leaves on a compact
    Riemannian manifold $M$, such that $T \FF$ and $T \GG$ are continuous as
    subbundles of $TM$.  
    If the lifted foliations on the universal cover $\tilde M$
    \begin{itemize}
        \item
        have \pgps,

        \item
        have polynomial bounds on rectangles, and

        \item
        have polynomial growth of volume,
    \end{itemize}
    then $\tilde M$ has polynomial growth of volume.
\end{prop}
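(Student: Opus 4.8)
The plan is to cover the metric ball $B(x,R)$ --- for arbitrary $x\in\tilde M$ and $R>0$ --- by a polynomial-in-$R$ number of ``product boxes'', each of volume bounded by a constant independent of $R$. At one point I will use the elementary fact that $\tilde M$, being the universal cover of a compact manifold, has bounded geometry, so that $\volume(B(z,2))<V_0$ for some constant $V_0$ and every center $z$; note this is a priori far weaker than the polynomial growth we are proving. Write $[a,b]$ for the unique point of $\FF(a)\cap\GG(b)$ and, for sets $A,B$, put $[A,B]=\{[a,b]:a\in A,\ b\in B\}$. Observe that $[a,b]$ depends continuously on $(a,b)$: near any prospective limit point the local product structure produces an intersection of the relevant plaques of $\FF(a)$ and $\GG(b)$, and \gps\ forces that to be the only one.

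The first step uses \pgps\ to trap the ball in a box. Given $z\in B(x,R)$, put $w=[x,z]$ and $y=[z,x]$; then $\FF(y)=\FF(z)$ and $\GG(w)=\GG(z)$, so $z\in\FF(y)\cap\GG(w)$, i.e.\ $z=[y,w]$, while \pgps\ gives $d_\FF(x,w)<p_0(R)$ and $d_\GG(x,y)<p_0(R)$, where $p_0$ is the (nondecreasing) polynomial from \pgps. Hence $B(x,R)\subseteq\bigl[\GG(x,p_0(R)),\,\FF(x,p_0(R))\bigr]$. The second step discretizes the two leaf balls. Fix a polynomial $p_1$ valid in the rectangle condition for both $(\FF,\GG)$ and $(\GG,\FF)$; put $P(R)=2p_0\bigl(2p_0(R)+3\bigr)$ (the precise shape of $P$ will fall out of Step~3) and $\delta(R)=1/p_1\bigl(P(R)\bigr)$, whose reciprocal is polynomial in $R$ and which we may assume is small. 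Choose maximal $\delta(R)$-separated subsets $\{w_i\}$ of $\FF(x,p_0(R))$ and $\{v_j\}$ of $\GG(x,p_0(R))$; the balls $\FF(w_i,\delta(R))$ and $\GG(v_j,\delta(R))$ then cover the two leaf balls, and because $\FF$ and $\GG$ have polynomial growth of volume while a leaf ball of small radius $r$ has volume at least $c\,r^{\dim\FF}$ (resp.\ $c\,r^{\dim\GG}$), there are only polynomially many $w_i$ and $v_j$; say at most $N(R)$ of each, with $N$ independent of $x$. Thus $B(x,R)$ is covered by the $N(R)^2$ boxes $\bigl[\GG(v_j,\delta(R)),\,\FF(w_i,\delta(R))\bigr]$.

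The crux is to bound the volume of each box by $V_0$, independently of $R$. Fix a box, set $z^\ast=[v_j,w_i]$, and take any $y\in\GG(v_j,\delta(R))$ and $w\in\FF(w_i,\delta(R))$; the goal is $[y,w]\in B(z^\ast,2)$. Because \gps\ holds, for any paths $\gamma$ from $w_i$ to $v_j$ and $\beta$ from $w_i$ to $w$ inside $\FF(w_i)$, the map $(s,t)\mapsto[\gamma(t),\beta(s)]$ is automatically a rectangle for $(\FF,\GG)$ --- the two ``slice lies in a leaf'' conditions and the continuity are immediate --- whose $t=0$ edge runs from $w_i$ to $w$ in $\FF(w_i)$ and whose $t=1$ edge runs from $z^\ast$ to $[v_j,w]$ in $\FF(v_j)$. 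Taking $\gamma$ nearly $d$-shortest and estimating leafwise distances $d_\GG(b,[a,b])$ by $p_0(d(a,b))$ via \pgps, one finds its $\GG$-width is below $P(R)$, while its $t=0$ edge has $\FF$-length $d_\FF(w_i,w)<\delta(R)=1/p_1(P(R))$; the rectangle condition then yields $d_\FF(z^\ast,[v_j,w])<1$. A symmetric rectangle for $(\GG,\FF)$, built from a path from $v_j$ to $y$ inside $\GG(v_j)$ and a path from $v_j$ to $w$, gives $d_\GG([v_j,w],[y,w])<1$. Since ambient distance is at most leaf distance, adding gives $d(z^\ast,[y,w])<2$, so the box lies in $B(z^\ast,2)$ and has volume below $V_0$. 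Summing over the $N(R)^2$ boxes, $\volume(B(x,R))<N(R)^2\,V_0$, which is polynomial in $R$; as $x$ and $R$ were arbitrary, $\tilde M$ has polynomial growth of volume.

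The main obstacle is Step~3: a product box whose two defining plaques lie a polynomial distance apart need not resemble a standard flow box, and the rectangle condition is the only handle on how badly the bracket can distort distances over that range. The real work is choosing the auxiliary paths so that the two rectangles have polynomially bounded width --- this is precisely where \pgps\ is used a second time, now to bound $d_\FF(a,[a,b])$ and $d_\GG(b,[a,b])$ in terms of $d(a,b)$ --- and then checking that one polynomial $P$, hence one scale $\delta(R)$, serves both rectangles at once. The remaining ingredients are comparatively routine: the counts in Step~2 are a standard packing argument using polynomial growth of the leaves, and the continuity of the bracket used in Step~3 follows from \gps\ together with the continuity of $T\FF$ and $T\GG$.
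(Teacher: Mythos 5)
Your proof is correct and reconstructs exactly the argument the paper attributes to Brin: trap $B(x,R)$ inside a product set via \pgps, discretize the two leaf balls into polynomially many $\delta(R)$-nets using polynomial growth of the leaves, and use the polynomial bound on rectangles (with the bracket-built rectangles of polynomially bounded width) to show each product box has uniformly bounded volume. The quantitative bookkeeping (the choice of $P(R)$ and $\delta(R)=1/p_1(P(R))$, and the identification of the rectangle edges with $w_i\to w$, $z^\ast\to[v_j,w]$, etc.) all checks out, so no further changes are needed.
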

Pinching is needed only to establish the first of the above three conditions.

\begin{cor} \label{corpoly}
    If an Anosov diffeomorphism $f:M \to M$ has \pgps, the universal cover has
    polynomial growth of volume.
\end{cor}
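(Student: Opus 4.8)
The plan is to obtain the corollary as an immediate application of Proposition~\ref{grow}, with $\FF = \Ws$ and $\GG = \Wu$ taken to be the stable and unstable foliations of $f$; all that remains is to verify the hypotheses of that proposition for this pair.

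First I would invoke the standard structure theory for Anosov diffeomorphisms: $\Ws$ and $\Wu$ are $C^0$ foliations of $M$ whose leaves are $C^1$ (indeed $C^\infty$) immersed submanifolds, and the tangent distributions $\Es = T\Ws$ and $\Eu = T\Wu$ are continuous subbundles of $TM$. This supplies the regularity hypotheses of Proposition~\ref{grow}. Since the covering map $\tilde M \to M$ is a local isometry, these properties are inherited by the lifts $\tilde\Ws$ and $\tilde\Wu$, which is the setting in which the three remaining conditions are phrased. Those conditions are then checked one at a time: polynomial \gps\ of the lifted foliations is exactly the hypothesis placed on $f$; polynomial bounds on rectangles for $\tilde\Ws$ and $\tilde\Wu$ is the lemma recording the observation of \cite{Brin-nw}; and polynomial growth of volume for $\tilde\Ws$ and $\tilde\Wu$ is the subsequent lemma, which follows from Shub's theorem \cite{shub1970expanding} applied to the uniform contraction and expansion along stable and unstable leaves. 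With all hypotheses in place, Proposition~\ref{grow} yields that $\tilde M$ has polynomial growth of volume.

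There is no genuine obstacle here: the corollary is a packaging step whose proof consists of matching each hypothesis of Proposition~\ref{grow} to a fact already established. The only point worth a moment's care is that \pgps\ for $f$ is, by definition, a statement about $\tilde\Ws$ and $\tilde\Wu$ on the universal cover, so it aligns verbatim with the first bullet of the proposition, while the regularity hypotheses, stated for foliations on the compact manifold $M$, are verified downstairs on $M$ and pass to the cover automatically.
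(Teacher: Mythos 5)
Your proof is correct and matches the paper's intended argument exactly: the corollary is obtained by feeding the \pgps\ hypothesis, together with the two preceding lemmas on rectangle bounds and leafwise volume growth, into Proposition~\ref{grow}. Nothing further is needed.
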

For a compact manifold $M$, the universal cover has polynomial growth of
volume if and only if the fundamental group does.  In such a case, a famous
result of Gromov shows that the fundamental group is virtually nilpotent
\cite{gromov1981groups}.  This fact allowed Brin and Manning to classify Anosov
diffeomorphisms with pinched spectra \cite{brin1981anosov}.  In fact, their
paper proves the following more general result.

\begin{thm}
    [Brin-Manning]
    If an Anosov diffeomorphism $f$ has \gps, and the universal cover has
    polynomial growth of volume, then $f$ is topologically conjugate to an
    infranilmanifold automorphism.
\end{thm}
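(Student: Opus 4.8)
The plan is to run the argument of Manning and Franks in the nilpotent setting. The hypotheses hand us, via the stated equivalence for compact manifolds together with Gromov's theorem, a virtually nilpotent fundamental group, while \gps\ supplies the topological rigidity of $M$; from these one builds an infranilmanifold model and then the conjugacy. I would organize the proof in three stages: (i) produce a closed infranilmanifold $M_0$ homotopy equivalent to $M$; (ii) produce the candidate automorphism $L\colon M_0\to M_0$ and show it is Anosov; (iii) construct a topological conjugacy $h\colon M\to M_0$ between $f$ and $L$.

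\emph{Stage (i).} Since $f$ is Anosov, each lifted stable leaf $\Ws(x)\subseteq\tilde M$ is an injectively immersed copy of $\bbR^{\dim\Es}$ (it is contracted by a lift of $f$, hence simply connected), and similarly $\Wu(x)\cong\bbR^{\dim\Eu}$; fixing $p\in\tilde M$, the map $x\mapsto(\Ws(x)\cap\Wu(p),\,\Ws(p)\cap\Wu(x))$ is a continuous bijection $\tilde M\to\Wu(p)\times\Ws(p)$ by \gps, hence a homeomorphism by invariance of domain. Thus $\tilde M\cong\bbR^n$ with $n=\dim M$, so $M$ is aspherical and $\pi:=\pi_1(M)$ is torsion-free (it acts freely on a contractible space) and, by hypothesis with Gromov's theorem, virtually nilpotent. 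By Auslander's theorem a torsion-free finitely generated virtually nilpotent group is a torsion-free cocompact discrete subgroup of $\mathrm{Aff}(N)=N\rtimes\mathrm{Aut}(N)$ for a unique simply connected nilpotent Lie group $N$ (with $\Gamma:=\pi\cap N$ a finite-index lattice, realized by Malcev's theorem), so $M_0:=N/\pi$ is a closed infranilmanifold with $\pi_1(M_0)=\pi$. As $M$ and $M_0$ are closed aspherical $n$-manifolds with the same fundamental group, there is a homotopy equivalence $q\colon M\to M_0$ inducing the identity on $\pi$.

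\emph{Stages (ii) and (iii).} The homeomorphism $f$ induces an automorphism $f_*$ of $\pi$; by the rigidity of infranilmanifolds (Malcev's extension of automorphisms of $\Gamma$ to $N$, together with Auslander) $f_*$ is realized by conjugation by an affine automorphism $\bar A$ of $N$ normalizing $\pi$, which descends to an infranilmanifold automorphism $L\colon M_0\to M_0$ with $L\circ q\simeq q\circ f$. To see that $L$ is Anosov, i.e.\ that the Lie algebra automorphism of $\mathfrak n=\operatorname{Lie}(N)$ induced by $\bar A$ has no eigenvalue of modulus one, I run Manning's spectral argument: such an eigenvalue would produce a nontrivial $\gamma\in\Gamma$ whose word length $|f_*^{\,m}\gamma|$ grows subexponentially as $m\to\pm\infty$, whence (uniformly in $x\in\tilde M$, by the Milnor--\v{S}varc lemma) $d((\tilde f^{\,m})(x),(\tilde f^{\,m})(\gamma x))=d(\tilde f^{\,m}x,(f_*^{\,m}\gamma)\tilde f^{\,m}x)$ is comparable to $|f_*^{\,m}\gamma|$, giving two distinct points whose $\tilde f$-orbits stay subexponentially close in both time directions — impossible for an Anosov lift. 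For stage (iii), lift $q$ to a $\pi$-equivariant quasi-isometry $\tilde q\colon\tilde M\to N$; then $\tilde q\circ\tilde f$ and $\bar A\circ\tilde q$ are at bounded distance (lifts of homotopic maps), and since $\bar A$ is hyperbolic the standard contraction-mapping (shadowing) argument produces a unique $\pi$-equivariant $\tilde h$ at bounded distance from $\tilde q$ with $\bar A\circ\tilde h=\tilde h\circ\tilde f$. It descends to a continuous $h\colon M\to M_0$ with $h\circ f=L\circ h$; $h$ has degree one (like $q$), hence is surjective, and it is injective by the usual expansivity argument, since equal $\tilde h$-values would force the $\tilde f$-orbits of two points to stay boundedly close for all time. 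Thus $f$ is topologically conjugate to the infranilmanifold automorphism $L$ (and, as a byproduct, $M\cong M_0$).

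I expect the main obstacle to be the hyperbolicity of $\bar A$ in Stage (ii): one has to convert purely algebraic information about $f_*$ acting on $\pi$ into the dynamical statement that no eigenvalue of its linearization lies on the unit circle, with the only available leverage being the uniform expansion and contraction of the Anosov lift, which must be tied to word growth in $\pi$ with some care — and the same uniform-expansivity input is what makes the injectivity of $h$ in Stage (iii) go through. Stages (i) and (iii) otherwise follow the torus case of Franks and Manning essentially verbatim once the model $N$ and $M_0$ are in hand.
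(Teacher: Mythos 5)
First, note that the paper itself offers no proof of this theorem: it is quoted as a known result with the citation \cite{brin1981anosov}, so the comparison must be with the classical Franks--Manning--Brin argument. Your overall architecture is the right one and matches that argument: \gps\ gives $\tilde M\cong\Wu(p)\times\Ws(p)\cong\bbR^n$, hence $M$ aspherical and $\pi_1(M)$ torsion-free; Gromov plus the generalized Bieberbach/Auslander--Lee--Raymond theory produces the infranilmanifold model $M_0$ and the affine realization of $f_*$; and the shadowing construction gives the (semi)conjugacy. Those stages are fine as an outline.

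The genuine gap is in the two steps you yourself flag as the main obstacle, and your proposed resolution of them rests on a false implication. Both the hyperbolicity of $\bar A$ ("two distinct points whose $\tilde f$-orbits stay subexponentially close in both time directions --- impossible for an Anosov lift") and the injectivity of $h$ ("equal $\tilde h$-values would force the orbits to stay boundedly close for all time") require that distinct points of $\tilde M$ separate, in the \emph{ambient} metric of the universal cover, under forward or backward iteration. The Anosov property only gives exponential divergence in the \emph{leafwise} metrics $d_s$, $d_u$; a leaf can a priori accumulate on itself in $\tilde M$ so that leafwise distance grows exponentially while ambient distance stays bounded. Converting leafwise growth into ambient growth is exactly the content of quasi-isometry of the foliations and of \pgps\ --- the subject of this very paper --- and neither is available here: only \gps\ and polynomial volume growth are assumed. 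So "impossible for an Anosov lift" is not a proof; it is the hard point. The published arguments avoid it: Manning establishes hyperbolicity of the linearization via periodic-point counting (for an Anosov map all fixed points of $f^m$ have index of the same sign, so $\#\mathrm{Fix}(f^m)$ equals the Nielsen/Lefschetz number $|\det(\bar A^m-I)|$ on a nilmanifold, and a unit-modulus eigenvalue is incompatible with the growth of periodic orbits), and injectivity of $h$ is proved by using \gps\ together with the affine product structure on $N$ to reduce to injectivity of $\tilde h$ on individual stable and unstable leaves, where the uniqueness clause of the shadowing/contraction argument applies. As written, your proposal does not supply either of these mechanisms, so the proof is incomplete at precisely its critical steps.
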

Taken with Corollary \ref{corpoly}, this proves one direction of Theorem
\ref{polyiff}.

The other direction is an enjoyable exercise in basic Lie group theory.
%
As a hint, we suggest the following.

\begin{lemma}
    Let $G$ be a nilpotent group with a subset $V$ such that $[V,V] \subset
    V$.
    Any element of the form
    $ g = s_1 u_1 \cdots s_n u_n $
    with $s_i,u_i \in V$ may be rewritten as 
    \[        g = s_1 \cdots s_n \, a_1 \cdots a_\ell \, u_1 \cdots u_n  \]
    where $a_k \in [V,V]$ and where $\ell$ depends polynomially on $n$.

\end{lemma}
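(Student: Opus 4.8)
The plan is to prove this by a collecting, or bubble-sort, argument. Let $c$ be the nilpotency class of $G$, and write $[x,y] = x\inv y\inv xy$, so that $xy = yx[x,y]$. View the word $g = s_1 u_1 s_2 u_2 \cdots s_n u_n$ as a product of $2n$ elements of $V$, each tagged as \emph{$s$-type} or \emph{$u$-type}, and set as target the form $s_1 \cdots s_n\, a_1 \cdots a_\ell\, u_1 \cdots u_n$. Repeatedly apply one of three local moves: replace an adjacent subword $u_i\,s_j$ by $s_j\,u_i\,[u_i,s_j]$; replace $a\,s_j$ by $s_j\,a\,[a,s_j]$ when $a$ is one of the commutators already introduced; or replace $u_i\,a$ by $a\,u_i\,[u_i,a]$. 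Give each original $s_i,u_i$ weight $1$, and each commutator $[x,y]$ the sum of the weights of $x$ and $y$. Since $[V,V]\subset V$, a straightforward induction shows every commutator that ever appears again lies in $[V,V]$, no matter how deeply iterated; and since $G$ has class $c$, any commutator of weight $> c$ is trivial, so such a factor is simply dropped and the move degenerates to an ordinary transposition. Hence every new element $a_k$ lies in $[V,V]$, as required, and it remains to check that the process halts and to bound the number of $a_k$.

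The key observation is that each move pushes an $s$-type element strictly to the left, pushes a $u$-type element strictly to the right, and moves a commutator only rightward past $s$-type elements and only leftward past $u$-type elements. Two consequences follow. First, for a fixed pair $(u_i,s_j)$ the first move can fire at most once; since $g$ has $s_j$ to the right of $u_i$ precisely when $j > i$, while the target has every $s$ left of every $u$, the first move fires exactly once when $i<j$ and never otherwise, for a total of $\binom n2$ times. Second, a fixed commutator $a$ is crossed by a fixed $s_j$ at most once and by a fixed $u_i$ at most once, so over its whole lifetime $a$ is crossed by at most $n$ of the $s$'s and at most $n$ of the $u$'s.

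Now count. Let $M_k$ denote the total number of weight-$k$ commutators ever created. Weight-$2$ commutators arise only from the first move, so $M_2 = \binom n2$; a commutator of weight $k+1$ is produced only when some $s_j$ or $u_i$ crosses a weight-$k$ commutator, so the second consequence above gives $M_{k+1}\le 2n\,M_k$. Therefore $M_k \le (2n)^{k-2}\binom n2$, and $M_k=0$ once $k>c$. In particular there are only finitely many moves in all ($\binom n2$ of the first kind, at most $2n\sum_k M_k$ of the other two), so the process terminates; and when no move applies, the word has all $s$-type elements first (still in the order $s_1\cdots s_n$, as no move transposes two $s$'s), then all commutators, then all $u$-type elements in the order $u_1\cdots u_n$. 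This is the desired form, with $\ell = \sum_{k=2}^{c} M_k \le \sum_{k=2}^{c}(2n)^{k-2}\binom n2$, a polynomial in $n$ of degree $c$ (pad with trivial factors $[v,v]$ if one wants $\ell$ to be exactly a fixed polynomial).

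The genuine content is the bookkeeping of the last two paragraphs, namely that the spawned commutators cannot proliferate uncontrollably. It relies on exactly the two hypotheses: $[V,V]\subset V$ keeps every iterated commutator that appears of the admissible type, and nilpotency caps its weight at $c$, which is what truncates the recursion $M_{k+1}\le 2nM_k$ at depth $c$. One could instead induct on the nilpotency class, passing to $G/\gamma_c(G)$, but then the leftover central factor must be handled separately, which the direct collecting argument avoids. I expect the main obstacle to be formulating the ``$s$'s move only left, $u$'s move only right'' invariant and the at-most-once crossing claims carefully enough to make the counting airtight.
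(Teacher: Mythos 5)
Your collecting argument is correct and complete: the relative-order monotonicity ($s$'s only ever move left, $u$'s only ever move right, and the unique move that transposes a given pair of types reverses their order irreversibly) does give the at-most-once crossing claims, and together with the fact that a weight-$k$ commutator lies in $\gamma_k(G)$ this truncates the recursion $M_{k+1}\le 2nM_k$ at the nilpotency class and yields $\ell \le \sum_{k=2}^{c}(2n)^{k-2}\binom{n}{2}$. The paper deliberately states this lemma without proof, as a hint for an exercise, so there is no argument to compare against; yours is the standard commutator-collecting proof the author presumably intends.
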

\begin{defn}
    We define \emph{\lgps} as \pgps\ where the associated polynomial may be
    taken as linear, that is, of the form $p(t) = a t + b$.
\end{defn}
The following is proved, but not explicitly stated in \cite{ham-conseq}.

\begin{prop} \label{qil}
    An Anosov diffeomorphism with quasi-isometric stable and unstable
    foliations has \lgps.
\end{prop}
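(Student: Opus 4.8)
The plan is to pass to the universal cover $\tilde M$, on which $\tilde f$ lifts $f$ and the lifted stable and unstable foliations $\Ws,\Wu$ are quasi-isometric with a common constant $Q$. Along any leaf one has $d \le d_{\Ws} \le Q\,d+Q$, and likewise for $\Wu$, so leafwise and ambient distance are coarsely equivalent on leaves. Hence, once we know that every stable leaf meets every unstable leaf, so that $[x,y] := \Ws(x)\cap\Wu(y)$ is defined, it is enough to produce constants with $d(x,[x,y]) \le a_0\,d(x,y)+b_0$: the bound $d(y,[x,y]) \le d(x,y)+d(x,[x,y])$ is then linear as well, and $d_{\Ws}(x,[x,y])+d_{\Wu}(y,[x,y]) \le Q\bigl(d(x,[x,y])+d(y,[x,y])\bigr)+2Q$ is linear in $d(x,y)$; that is precisely \lgps. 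So the argument has two parts: the \gps, and the linear estimate.

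The \gps\ is mostly a matter of coarse geometry. Uniqueness of the intersection point is immediate from hyperbolicity and quasi-isometry: if $p\ne q$ both lay in $\Ws(x)\cap\Wu(y)$, then forward iteration contracts $\Ws$ and expands $\Wu$, so that
\[
  d(\tilde f^{n}p,\tilde f^{n}q) \le d_{\Ws}(\tilde f^{n}p,\tilde f^{n}q) \le \sigma^{n}d_{\Ws}(p,q) \to 0
  \quad\text{and}\quad
  d_{\Wu}(\tilde f^{n}p,\tilde f^{n}q) \ge \mu^{n}d_{\Wu}(p,q) \to \infty,
\]
contradicting the quasi-isometry bound $d_{\Wu}(\tilde f^{n}p,\tilde f^{n}q) \le Q\,d(\tilde f^{n}p,\tilde f^{n}q)+Q$. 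For existence, fix $x$ and let $A$ be the set of $y$ with $\Ws(x)\cap\Wu(y)\ne\emptyset$; it is nonempty, and it is open by the uniform local product structure together with invariance of the foliations (pull back by $\tilde f^{-n}$ until the intersection point comes within the local-product scale of $\tilde f^{-n}y$, reconstruct it for nearby $y$, and push forward). It is also closed: the inclusion of the leaf $\Ws(x)$, with its intrinsic metric, into $\tilde M$ is a quasi-isometric embedding, so an ambient-convergent sequence of intersection points remains in a bounded ball of $\Ws(x)$, which is compact by Hopf--Rinow, and therefore the limit lies on $\Ws(x)$. Thus $A=\tilde M$. I would invoke \cite{ham-conseq} for exactly these topological properties of quasi-isometric foliations.

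The linear estimate on $d(x,[x,y])$ is the step I expect to be the real obstacle, and the place where quasi-isometry must be used in an essential, global way. The naive attempt---chaining the local product structure along a geodesic from $x$ to $y$---fails, because the holonomy $y\mapsto[x,y]$ can distort distances severely (stable and unstable foliations are only H\"older transversally), so the local discrepancies compound. Quasi-isometry is precisely what rules this out, by bounding how far a leaf can wander: iterating $d_{\Wu}(\tilde f^{-n}y,\tilde f^{-n}[x,y]) \le \mu^{-n}\bigl(Q\,d(y,[x,y])+Q\bigr)$ and $d_{\Ws}(\tilde f^{n}x,\tilde f^{n}[x,y]) \le \sigma^{n}\bigl(Q\,d(x,[x,y])+Q\bigr)$ shows that the orbit of $[x,y]$ shadows that of $x$ in forward time and that of $y$ in backward time, and the crux is to convert this shadowing into the quantitative bound. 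In coarse language, the statement to prove is that the pair of quasi-isometric foliations makes $\tilde M$ a coarse product---the natural map $\tilde M \to \tilde M/\Ws \times \tilde M/\Wu$ to the (Hausdorff, Euclidean) leaf spaces is a quasi-isometry, i.e.\ $d(x,y) \asymp d_{\Ws}(x,[x,y])+d_{\Wu}(y,[x,y])$ up to an additive constant---and this is the content I would extract from \cite{ham-conseq}. With it in hand, the reductions of the first paragraph conclude the proof.
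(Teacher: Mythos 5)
The paper itself offers no argument for this proposition: it is introduced with ``The following is proved, but not explicitly stated in \cite{ham-conseq}'', so the ``paper's proof'' is a citation. Your reductions are the right skeleton --- quasi-isometry makes leafwise and ambient distance comparable along a leaf, so linear \gps\ reduces to (i) existence and uniqueness of $[x,y]$ and (ii) an ambient bound $d(x,[x,y]) \le a_0\, d(x,y)+b_0$ --- and your uniqueness argument (forward iteration contracts $d_{\Ws}$, expands $d_{\Wu}$, contradicting $d_{\Wu} \le Q\,d+Q$) is correct.

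However, the two hard steps are not actually carried out, so the proposal has a genuine gap. First, the closedness half of your open--closed argument is circular as written: for $y_k \to y$ you must show the intersection points $[x,y_k]$ stay in a bounded set before you can speak of an ``ambient-convergent sequence'' and extract a limit in the properly embedded leaf $\Ws(x)$; that a priori boundedness is precisely the quantitative estimate (ii) that you have postponed. The standard repair is to run the connectedness argument on the set of $y$ for which $[x,y]$ exists \emph{and} satisfies the linear bound with constants fixed in advance, so that existence and the estimate are established simultaneously rather than in the order you propose. Second, you explicitly identify the linear estimate as ``the crux'' and then propose to extract it from \cite{ham-conseq} --- but that is exactly the reference in which the paper says the entire proposition is proved, so at the decisive point the proposal collapses to the same citation and is not an independent proof. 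The shadowing inequalities you write down (the orbit of $[x,y]$ tracks $x$ forward along $\Ws$ and $y$ backward along $\Wu$) are the correct raw material, but the passage from that shadowing to a linear bound on $d(x,[x,y])$, which defeats the exponential compounding of local product structure that you rightly flag, is where all the work lies, and it is missing.
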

Taken with Theorem \ref{polyiff}, this proves Theorem \ref{qianosov}.

\section{Partially hyperbolic systems} \label{phsec} 

First, consider weakly partially hyperbolic systems.  For simplicity, assume
$\Es$ is the zero bundle.  That is, the splitting is $TM = \Ec \oplus \Eu$.
For every weakly partially hyperbolic system, there is a unique foliation
tangent to the unstable subbundle $\Eu$.  For the center subbundle
$\Ec$, there may or may not be a tangent foliation.  It is not known if a
such a center foliation must be unique \cite{BW-dc}.

\begin{defn}
    A weakly partially hyperbolic system has \emph{(linear/polynomial) \gps},
    if there are foliations $\Wc$ and $\Wu$ tangent to $\Ec$ and $\Eu$ which
    satisfy the property after lifting to the universal cover.
\end{defn}
\begin{lemma} \label{weakrec}
    If a weakly partially hyperbolic diffeomorphism has \lgps, then $\Wc$ and
    $\Wu$ have polynomial bounds on rectangles.
\end{lemma}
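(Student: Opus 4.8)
The plan is to run the same argument as in the earlier lemma on polynomial bounds on rectangles for Anosov systems, using \lgps\ to compensate for the absence of hyperbolicity along the center. Recall how that argument works for a pair $(\FF,\GG)$: given a $(\FF,\GG)$-rectangle $\phi$ whose $\GG$-columns $\phi(s,\cdot)$ have diameter less than $R$, one iterates $f$ in the sense that uniformly contracts these columns; after $n$ of order $\log R$ steps the columns lie below the scale on which $M$ has uniform local product structure, while the bottom $\FF$-arc $\phi(\cdot,0)$ has been dilated by at most a fixed exponential $\lambda^{n}$, which is polynomial in $R$; one then uses the uniform local product structure on the now-small rectangle and iterates back, the contraction of the $\FF$-direction under the inverse iteration producing the required estimate below $1$. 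With the splitting $TM=\Ec\oplus\Eu$, the columns of a $(\Wc,\Wu)$-rectangle lie in $\Wu$-leaves, which $f\inv$ uniformly contracts, so this argument goes through essentially verbatim; the one change is that $\Wc$ is only non-expanded, not contracted, so the inverse iteration re-expands $\Wc$-distances by a factor of at most $\mu^{n}$, which is of size $O(R)$ and is harmless once the polynomial $p$ is given one extra degree. In particular the ordering $(\Wc,\Wu)$ does not require \lgps.

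The ordering $(\Wu,\Wc)$ is the real content. Here the columns lie in $\Wc$-leaves; since the center is neither uniformly contracted nor uniformly expanded there is no direction of iteration that shrinks them, and the Brin-style scheme has nothing to push against. We first use \lgps\ to get a coarse bound. Because a rectangle is recovered from its edges through the product structure, $\phi(s,t)=\Wu(\phi(0,t))\cap\Wc(\phi(s,0))$, and applying the \lgps\ inequality with $x=\phi(0,t)$, $y=\phi(s,0)$ gives
\[
    d_{\Wu}\bigl(\phi(0,t),\phi(s,t)\bigr)+d_{\Wc}\bigl(\phi(s,0),\phi(s,t)\bigr)\ \le\ a\,d\bigl(\phi(0,t),\phi(s,0)\bigr)+b .
\]
Replacing $\phi$ by the rectangle with the same four corners but a near-minimizing bottom edge (which leaves the corners unchanged and the column-diameters changed by only a bounded factor), the bottom arc has length below $1$ and the left edge is a $\Wc$-arc of length below $R$, so the right side is at most $aR+O(1)$; thus every $R$-rectangle for $(\Wu,\Wc)$ has top $\Wu$-width below $aR+O(1)$.

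It remains to upgrade this linear bound to the bound $<1$ under the extra hypothesis that the bottom $\Wu$-width is below $1/p(R)$. The content is that the smallness of the bottom must be carried to the top along the center columns without accumulating the $O(R)$ distortion that the linear bound allows; equivalently, one must bound, polynomially in the length of the arc, the distortion of the holonomy of $\Wc$ along a $\Wc$-arc, viewed as a map between the transverse $\Wu$-leaves. This is where \lgps\ does genuine work: it already forces both $\Wc$ and $\Wu$ to be quasi-isometrically embedded (take $x,y$ on a common leaf in the \lgps\ inequality), and the plan is to combine this rigidity with the one piece of hyperbolicity that survives, the uniform expansion of $\Wu$, by contracting $\Wu$ so that the bottom arc — and the top arc, controlled by the linear bound just obtained — both fall below local scale, carrying out the holonomy comparison there, and iterating back, the final re-expansion of $\Wu$-distances being a fixed power of $R$ that is dominated by the gain $1/p(R)$ once the degree of $p$ is chosen large enough. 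Checking that the \lgps\ constants and the hyperbolic rates combine to give exactly ``$<1$'' — rather than only ``$<\operatorname{const}\cdot R^{\,c}$'', which is what every naive estimate produces — is the main obstacle; in the algebraic models the center holonomy has displacement that is constant along each leaf, so the heart of the matter is to see that \lgps\ forces enough of this constancy to survive quantitatively.
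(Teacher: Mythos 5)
Your treatment of the ordered pair $(\Wu,\Wc)$ --- which you correctly identify as the real content of the lemma --- stops short of a proof. The coarse bound you derive (top $\Wu$-width at most $aR+O(1)$) is correct but is not where the difficulty lies and is not used in the paper's argument, and the remaining step, which you describe only as a ``plan'' whose feasibility you yourself flag as the main obstacle, is exactly the part that needs an argument. Moreover, the plan as stated would not work: iterating $f\inv$ to bring the $\Wu$-arcs below local scale simultaneously expands the $\Wc$-columns (in the splitting $\Ec \oplus \Eu$ the center is only dominated by $\Eu$, not contracted, so $f\inv$ may stretch it at a definite exponential rate), so there is no small rectangle on which to perform a local holonomy comparison; and the distortion of center holonomy along a center arc of length $L$ admits no a priori bound better than exponential in $L$, which is precisely the loss you are trying to avoid.

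The paper closes this gap by arguing in the opposite direction, by contraposition, so that the bound $<1$ never has to be \emph{produced} --- it is the hypothesis. Assume $d_u(\phi(0,1),\phi(1,1))>1$ and iterate \emph{forward}: after $n$ steps the top $\Wu$-separation exceeds $\mu^n$ while the $\Wc$-columns have length at most $\gamma^n R$, where $\gamma<\mu$ by domination. Here \lgps\ is used in the quasi-isometric-embedding direction that you mention but do not exploit: $d_u(x,y)\le a\,d(x,y)+b$ converts the leafwise separation $\mu^n$ of the top corners into an \emph{ambient} separation of order $\mu^n/a$. Choosing $n$ proportional to $\log R$ so that this dominates $\gamma^n R$, the triangle inequality through the two short columns forces the \emph{bottom} corners to be ambiently separated by at least $\gamma^n R$, and pulling back by $f^{-n}$ costs only a factor $\beta^{-n}=R^{-O(1)}$. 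This yields $d_u(\phi(0,0),\phi(1,0))>1/p(R)$ with no holonomy estimate at all. (A smaller point: in the $(\Wc,\Wu)$ case your phrase ``$\Wc$ is only non-expanded'' is not justified for the same reason --- the center may be expanded, only at a rate dominated by that of $\Eu$ --- though this does not affect the polynomial nature of the resulting bounds.)
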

\begin{proof}
    For the ordered pair $(\Wc,\Wu)$ the proof is as in the Anosov case.  For
    $(\Wu,\Wc)$, consider an $R$-rectangle $\phi$ with $d_u(\phi(0,1),\phi(1,1)) >
    1$.  We wish to bound $d_u(\phi(0,0),\phi(0,1))$ from below.
    By a compactness argument, there are constants $\gamma, \mu, \beta$ such
    that
    \[
        \|Tf v^c\| < \gamma < \mu < \|Tf v^u\| < \beta
    \]
    for all unit vectors $v^c \in \Ec$ and $v^u \in \Eu$.  Then,
    $
        d_u(f^n \phi(0,1), f^n \phi(1,1)) > \mu^n
    $
    for all $n > 0$.
    By \lgps, there is $n$ proportional to $\log R$ such that
    $
        d(f^n \phi(0,0), f^n \phi(1,1)) > 2 \gamma^n R.
    $
    Then
    \begin{align*}
        d( f^n \phi(1,0), f^n \phi(1,1) ) < \gamma^n R & \quad \Rightarrow \quad 
        d(f^n \phi(0,0), f^n \phi(1,0) ) > \gamma^n R \\ & \quad \Rightarrow \quad 
        d(\phi(0,0), \phi(1,0)) > \beta^{-n} \gamma^n R.
    \end{align*}
    As $n$ is proportional to $\log R$, this gives a polynomial bound.
    
\end{proof}
As in the Anosov case, the strong expansion in the unstable direction implies
that $\Wu$ has polynomial growth of volume, but no such claim can be made for
the center.  Lemma \ref{weakrec} and Proposition \ref{grow} give the following.

\begin{prop}
    If $f:M \to M$ is weakly partially hyperbolic with \lgps, and $\Wc$ has
    polynomial growth of volume, then the universal cover $\tilde M$ has
    polynomial growth of volume.
\end{prop}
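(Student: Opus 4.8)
The plan is to deduce the statement directly from Proposition \ref{grow}, applied to the lifted foliations $\Wc$ and $\Wu$ on the universal cover $\tilde M$; the proof then reduces to checking that this pair of foliations satisfies every hypothesis of that proposition. First I would record the regularity hypotheses. Since $f$ is weakly partially hyperbolic, the subbundles $\Ec$ and $\Eu$ are continuous, and by the definition of \lgps\ in this setting the tangent foliations $\Wc$ and $\Wu$ are given as part of the data; these are $C^0$ foliations with $C^1$ leaves, and all of these properties lift to $\tilde M$, so $T\Wc$ and $T\Wu$ are continuous subbundles of $T\tilde M$ with $C^1$ leaves, as Proposition \ref{grow} demands. (If $M$ is not assumed compact one replaces it by a compact quotient; here $M$ is compact by hypothesis, so nothing is needed.)

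Next I would verify the three bulleted conditions in turn. For \pgps: the system has \lgps\ by hypothesis, and a linear function is in particular a polynomial, so $\Wc$ and $\Wu$ have \pgps\ on $\tilde M$. For polynomial bounds on rectangles: this is exactly the content of Lemma \ref{weakrec}, which is available since $f$ has \lgps. For polynomial growth of volume of the two foliations: the hypothesis supplies this for $\Wc$ directly, while for $\Wu$ it was already noted above that the uniform expansion of $f$ along $\Eu$ forces $\Wu$ to have polynomial growth of volume, by the leafwise version of the argument of \cite{shub1970expanding}. Having checked all hypotheses, Proposition \ref{grow} yields that $\tilde M$ has polynomial growth of volume, which is the desired conclusion.

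I do not expect a genuine obstacle, since the proposition is essentially an assembly of results established earlier in the section. The one point meriting a line of care is confirming that Shub's leafwise-expansion estimate produces a leaf-\emph{independent} polynomial bounding $\volume(\Wu(x,R))$ for all $x$ and $R$, rather than a leaf-dependent one; but this follows from compactness of $M$ in exactly the way it is used for the stable and unstable foliations of an Anosov system, so it requires no new input.
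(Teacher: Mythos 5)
Your proposal is correct and is exactly the argument the paper intends: the proposition is stated as a direct consequence of Lemma \ref{weakrec} and Proposition \ref{grow}, with $\Wu$'s polynomial growth coming from the uniform expansion (as noted in the paragraph preceding the proposition) and $\Wc$'s from the hypothesis. No further comment is needed.
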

The proof of Proposition \ref{qil} in \cite{ham-conseq} also applies to weakly
partially hyperbolic systems, giving the following.

\begin{cor}
    If $f:M \to M$ is weakly partially hyperbolic, $\Wc$ and $\Wu$ exist and are
    quasi-isometric, and $\Wc$ has
    polynomial growth of volume, then the universal cover $\tilde M$ has
    polynomial growth of volume.
\end{cor}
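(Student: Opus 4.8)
The plan is to recognize this corollary as the concatenation of two results already in hand, so the proof should be short. The first step is to upgrade the quasi-isometry hypothesis to \lgps. This is exactly the content of the remark immediately preceding the statement: the argument proving Proposition \ref{qil} in \cite{ham-conseq} applies to weakly partially hyperbolic systems, so a weakly partially hyperbolic $f$ whose foliations $\Wc$ and $\Wu$ exist and are quasi-isometric has \lgps. In writing this up I would note that the argument needs only a local product structure for the pair $(\Wc,\Wu)$ together with the uniform expansion along $\Eu$ and the domination $\|Tf v^c\| < \mu < \|Tf v^u\|$ from the definition; since it never uses a contracting bundle, the absence of $\Es$ causes no difficulty.

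With \lgps\ established, the second step is simply to invoke the preceding proposition: a weakly partially hyperbolic diffeomorphism with \lgps\ and with $\Wc$ of polynomial growth of volume has a universal cover of polynomial growth of volume, and $\Wc$ has polynomial growth of volume by hypothesis. Should one prefer to avoid the black box, the ingredients unwind as follows. Lemma \ref{weakrec} turns \lgps\ into polynomial bounds on rectangles for $\Wc$ and $\Wu$; \lgps\ is in particular \pgps; the foliation $\Wu$ has polynomial growth of volume by Shub's theorem applied to the uniform expansion in the unstable direction, while $\Wc$ does so by assumption; and $T\Wc$, $T\Wu$ are continuous subbundles with $C^1$ leaves by the usual invariant manifold theory. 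Proposition \ref{grow}, applied to $(\Wc,\Wu)$, then gives that $\tilde M$ has polynomial growth of volume.

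The main point requiring any care — there is no real obstacle here, since the substance lives in \cite{ham-conseq} and in Proposition \ref{grow} — is to verify that every hypothesis feeding Proposition \ref{grow} genuinely survives the passage from the Anosov to the weakly partially hyperbolic setting. In particular one should check that $\Wu$ still has polynomial growth of volume (it does, as $f$ still uniformly expands $\Eu$), that Lemma \ref{weakrec} is available (it is, once \lgps\ is known), and that the quasi-isometry-to-\lgps\ argument of \cite{ham-conseq} indeed makes no use of a stable subbundle.
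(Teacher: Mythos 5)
Your proof is correct and follows exactly the paper's intended route: the remark preceding the corollary upgrades the quasi-isometry hypothesis to \lgps\ via the argument of \cite{ham-conseq}, and the preceding proposition (itself resting on Lemma \ref{weakrec} and Proposition \ref{grow}) then yields polynomial growth of volume for $\tilde M$. Your additional verification that the unstable foliation retains polynomial growth and that no stable bundle is needed is a sensible sanity check but matches what the paper takes for granted.
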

Now consider strongly partially hyperbolic systems.
Due to the three-way splitting $\Es \oplus \Ec \oplus \Eu$, we need to define a
notion of product structure inside a foliation.

\begin{defn}
    Suppose $\FF$ and $\GG$ are subfoliations of a foliation $\WW$.  Then $\FF$
    and $\GG$ have \emph{(linear/polynomial) \gps\ inside} $\WW$ if they have
    this property when restricted to each leaf of $\WW$.  Any choice of
    polynomial must be independent of the leaf.
\end{defn}
Most of the above results can be adapted to the case of product structure
inside a foliation, with almost no change to the proofs.

\begin{defn}
    Suppose a strongly partially hyperbolic system $f$ has
    foliations $\Wu$, $\Ws$, $\Wc$, $\Wcu$ and $\Wcs$ tangent to $\Eu$, $\Es$,
    $\Ec$, $\Ec \oplus \Eu$ and $\Ec \oplus \Es$.
    Then, $f$ has \emph{(linear/polynomial) \gps} if the following pairs
    satisfy the property on the universal cover{:}
    \begin{itemize}
        \item $\Wcs$ and $\Wu$,
        \item $\Ws$ and $\Wcu$,
        \item $\Ws$ and $\Wc$ inside $\Wcs$, and
        \item $\Wc$ and $\Wu$ inside $\Wcu$.
    \end{itemize}  \end{defn}
The following is proved, but not explicitly stated in \cite{ham-conseq}.

\begin{prop}
    A strongly partially hyperbolic diffeomorphism with quasi-isometric
    stable, center, and unstable foliations has \lgps.
\end{prop}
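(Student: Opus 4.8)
The plan is to verify the four pairs in the definition one at a time, each time reducing to the quasi-isometric weakly partially hyperbolic case --- a weakly partially hyperbolic system with quasi-isometric center and unstable foliations has linear \gps, by an argument that is word for word that of Proposition~\ref{qil} and, as remarked above, adapts with essentially no change to product structure inside a foliation. A preliminary point is that the foliations $\Wcs$ and $\Wcu$ have to exist at all: under the hypotheses $f$ is dynamically coherent (part of what is established in \cite{ham-conseq}), so $\Wcs$ and $\Wcu$ exist and are subfoliated by $\Ws,\Wc$ and by $\Wc,\Wu$ respectively. By compactness of $M$ the relevant subbundles are transverse with a uniform angle, each of the four pairs admits a uniform local product structure, and the pointwise estimates $\|Tf v^s\|<\sigma$ and $\|Tf v^u\|>\mu$ iterate with no loss of constant. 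Inside the leaves of $\Wcs$ the restricted dynamics is already weakly partially hyperbolic with center $\Ec$, and likewise inside the leaves of $\Wcu$; and after replacing $f$ by a suitable power, which alters neither the foliations, nor their quasi-isometry constants, nor the validity of the linear \gps\ condition, the coarser splittings $T\tilde M = T\Wcs \oplus \Eu$ and $T\tilde M = \Es \oplus T\Wcu$ satisfy the inequalities defining a weakly partially hyperbolic system, with $\Ec\oplus\Es$, respectively $\Ec\oplus\Eu$, playing the role of the center.

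I would first handle the two leafwise pairs. Fix a leaf $L$ of $\Wcu$. Every leaf of $\Wc$ or of $\Wu$ meeting $L$ lies in $L$, and since leaf distance dominates ambient distance, the restrictions $\Wc|_L$ and $\Wu|_L$ are quasi-isometric inside $L$ with the ambient constant $Q$, independently of $L$. Applying the weakly partially hyperbolic analogue of Proposition~\ref{qil} inside $L$ then gives linear \gps\ for $(\Wc,\Wu)$ in that leaf, with a linear polynomial that --- every piece of input data being uniform over leaves --- does not depend on $L$; the same argument inside the leaves of $\Wcs$ gives linear \gps\ for $(\Ws,\Wc)$ inside $\Wcs$. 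This passage ``inside a foliation'' is where I expect the real work to be: one must confirm that uniform transversality, a uniform local product structure, uniform hyperbolicity rates, and the quasi-isometry constant can all be taken independent of the leaf, so that the resulting linear bound is leaf-independent. Each of these follows from compactness of $M$ together with dynamical coherence, and everything else is a direct transcription of the Anosov and weakly partially hyperbolic arguments of \cite{ham-conseq}.

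The two global pairs then follow by a bootstrap. If $x$ and $y$ lie in a common leaf $L$ of $\Wcs$, let $z$ be the point of $\Ws(x)\cap\Wc(y)\cap L$ produced above; then $d_{cs}(x,y) \le d_s(x,z) + d_c(z,y) < a\,d(x,y) + b$ with $a,b$ independent of $L$, so $\Wcs$ is quasi-isometric, and likewise $\Wcu$. Now $\Ws$, $\Wc$, $\Wu$, $\Wcs$ and $\Wcu$ are all quasi-isometric, so applying the weakly partially hyperbolic analogue of Proposition~\ref{qil} to the splitting $T\tilde M = \Es \oplus T\Wcu$ yields linear \gps\ for $(\Ws,\Wcu)$, and applying it to $T\tilde M = T\Wcs \oplus \Eu$ yields linear \gps\ for $(\Wcs,\Wu)$. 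Taking the largest of the four linear polynomials shows that $f$ has linear \gps.
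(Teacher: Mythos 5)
The paper itself gives no argument here beyond the citation of \cite{ham-conseq}, so your proposal can only be measured against the structure of that reference. Your architecture --- establish dynamical coherence to get $\Wcs$ and $\Wcu$, treat each of the four pairs by reducing to a two-bundle (weakly partially hyperbolic) situation, and use compactness of $M$ to make all constants leaf-independent --- is the right one and consistent with how \cite{ham-conseq} proceeds. The observation that the coarse splittings dominate only after passing to a power of $f$, and that this changes nothing relevant, is also correct and worth stating.

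The genuine gap is in your bootstrap, and it sits exactly at the interface between the two halves of your argument. By the paper's definition, \lgps\ for $\Ws$ and $\Wc$ \emph{inside} $\Wcs$ is the property ``restricted to each leaf,'' so the linear bound it supplies is $d_s(x,z)+d_c(z,y) < a\,d_L(x,y)+b$ where $d_L$ is the path metric of the leaf $L$ of $\Wcs$, i.e.\ $d_L = d_{cs}$. (Your own reduction --- ``since leaf distance dominates ambient distance'' --- delivers quasi-isometry of $\Wc|_L$ relative to $d_L$, and the weak analogue of Proposition~\ref{qil} run with that input returns a bound relative to $d_L$.) Feeding this into $d_{cs}(x,y)\le d_s(x,z)+d_c(z,y)$ gives only $d_{cs}(x,y) < a\,d_{cs}(x,y)+b$, which is vacuous; you have not obtained the ambient bound $d_{cs}(x,y)\le Q\,d(x,y)+Q$, and that quasi-isometry of $\Wcs$ (and $\Wcu$) is precisely the hypothesis you need to invoke the weakly partially hyperbolic version of Proposition~\ref{qil} for the global pairs $(\Wcs,\Wu)$ and $(\Ws,\Wcu)$. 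The fix requires a strictly stronger leafwise statement: one must rerun the argument of \cite{ham-conseq} inside $L$ using the quasi-isometry of $\Ws$ and $\Wc$ with respect to the metric of $\tilde M$ and check that the output bounds $d_s(x,z)+d_c(z,y)$ linearly in the ambient $d(x,y)$. That stronger estimate yields the leafwise \lgps\ and the quasi-isometry of $\Wcs$ and $\Wcu$ simultaneously, after which your final step goes through; but as written, your proof establishes the weak (leaf-metric) statement and then uses it as if it were the strong (ambient-metric) one.
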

The next two results may be proved from their weakly partially hyperbolic
counterparts.

\begin{prop}
    If $f:M \to M$ is strongly partially hyperbolic with \lgps, and $\Wc$ has
    polynomial growth of volume, then the universal cover $\tilde M$ has
    polynomial growth of volume.
\end{prop}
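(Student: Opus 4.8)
The plan is to reduce the statement twice to its weakly partially hyperbolic counterpart — the Proposition immediately preceding it — once on all of $\tilde M$, and once inside the foliation $\Wcs$.

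\emph{First reduction.} Regroup the splitting as $TM = \Ecs \oplus \Eu$ with $\Ecs = \Es \oplus \Ec$. A routine compactness argument (replacing $f$ by a power if necessary, which changes none of the invariant foliations, the \lgps\ conditions, or the growth of volume) shows that with this grouping $f$ is weakly partially hyperbolic, with ``center'' foliation $\Wcs$ and ``unstable'' foliation $\Wu$; the needed domination of $\Ecs$ by $\Eu$ comes from the original inequalities $\|Tf v^s\|, \|Tf v^c\| < \mu < \|Tf v^u\|$. By hypothesis the pair $\Wcs, \Wu$ has \lgps\ on $\tilde M$, so this weakly partially hyperbolic system has \lgps. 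The preceding Proposition then reduces us to showing that $\Wcs$ has polynomial growth of volume.

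\emph{Second reduction.} Work inside the foliation $\Wcs$, whose leaves carry the transverse subfoliations $\Ws$ and $\Wc$. Pass to $f\inv$, which interchanges the roles of $\Es$ and $\Eu$, and restrict attention to $\Wcs$: the two-sided separation $\|Tf v^s\| < \sigma < \|Tf v^c\|$ shows that $f\inv$ uniformly expands $\Ws$ and dominates $\Wc$ by $\Ws$, so inside each leaf of $\Wcs$ we are in precisely the ``weakly partially hyperbolic inside a foliation'' situation, with ``center'' $\Wc$ and ``unstable'' $\Ws$. By hypothesis $\Wc$ and $\Ws$ have \lgps\ inside $\Wcs$. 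Applying the version of the preceding Proposition (through Lemma~\ref{weakrec} and Proposition~\ref{grow}) adapted to product structure inside a foliation — which, as the text notes, needs only cosmetic changes, the one substantive point being to keep all polynomials uniform over the leaves of $\Wcs$ — we conclude that every leaf of $\Wcs$ has polynomial growth of volume with a single polynomial, provided $\Wc$ and $\Ws$ do. But $\Wc$ has polynomial growth of volume by hypothesis, and $\Ws$ does by the Shub-type argument quoted earlier, since $f$, hence $f\inv$, uniformly contracts, respectively expands, it. This supplies the missing ingredient of the first reduction and completes the proof.

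\emph{Main obstacle.} Very little here is genuinely new: the substance lies in verifying that the two regroupings of the splitting really yield weakly partially hyperbolic data — which for $\Ecs \oplus \Eu$ may require replacing $f$ by a high power before the pointwise domination becomes an honest inequality $\|Tf^N v^{cs}\| < \nu < \|Tf^N v^u\|$ with $\nu > 1$, and for $\Es$ versus $\Ec$ under $f\inv$ is immediate from $\|Tf v^s\| < \sigma < \|Tf v^c\|$ — and in confirming that Lemma~\ref{weakrec}, Proposition~\ref{grow}, and the preceding Proposition all carry over to product structure inside a foliation with polynomials independent of the leaf, as the text asserts. The only real bookkeeping is tracking that leafwise uniformity through the second reduction.
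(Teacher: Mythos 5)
Your proposal is correct and follows exactly the route the paper intends, which it compresses into the single remark that this result ``may be proved from its weakly partially hyperbolic counterpart'': regroup the splitting to apply the preceding Proposition with $\Wcs$ as center and $\Wu$ as unstable, then obtain the needed polynomial volume growth of $\Wcs$ by running the same argument for $\Ws$ and $\Wc$ inside $\Wcs$ (using the leafwise version of Lemma~\ref{weakrec} and Proposition~\ref{grow}). Your attention to passing to a power of $f$ for the domination of $\Ecs$ by $\Eu$ and to the leaf-uniformity of the polynomials supplies precisely the bookkeeping the paper leaves implicit.
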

\begin{cor} \label{phbound}
    If $f:M \to M$ is strongly partially hyperbolic, $\Ws$, $\Wc$ and $\Wu$ exist
    and are quasi-isometric, and $\Wc$ has polynomial growth of volume, then
    the universal cover $\tilde M$ has polynomial growth of volume.
\end{cor}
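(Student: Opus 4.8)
The plan is to recognize the hypotheses as two nested instances of the weakly partially hyperbolic corollary above: one inside the leaves of $\Wcs$, and one on $\tilde M$ itself, using that $TM = \Ecs \oplus \Eu$ is a weakly partially hyperbolic splitting whose expanding bundle $\Eu$ dominates the complementary bundle $\Ecs$.

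First, by the proposition that a strongly partially hyperbolic system with quasi-isometric stable, center and unstable foliations has \lgps, the map $f$ has \lgps. In particular $\Wcs$ and $\Wu$ have \lgps\ on $\tilde M$, and $\Ws$ and $\Wc$ have \lgps\ inside $\Wcs$. Since $\Ws$ and $\Wc$ are quasi-isometric in $\tilde M$ and the leaf metric of $\Wcs$ dominates the ambient metric, the restrictions of $\Ws$ and $\Wc$ to a leaf of $\Wcs$ are quasi-isometric subfoliations of that leaf, with constants independent of the leaf. Since $\Ws$ has polynomial growth of volume as a stable foliation and $\Wc$ has polynomial growth of volume by hypothesis, I would apply the ``inside a foliation'' versions of Lemma \ref{weakrec}, Proposition \ref{grow}, and the weakly partially hyperbolic corollary to the pair $(\Ws,\Wc)$ inside $\Wcs$, concluding that $\Wcs$ has polynomial growth of volume with a polynomial independent of the leaf.

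Next I would upgrade $\Wcs$ from a foliation with \lgps\ inside itself to a quasi-isometric foliation of $\tilde M$. For $x$ and $y$ on a common leaf of $\Wcs$, let $z$ be the unique point of $\Ws(x) \cap \Wc(y)$ in that leaf; \lgps\ inside $\Wcs$ gives constants $a,b$ with $d_s(x,z) + d_c(z,y) < a\, d(x,y) + b$, and hence
\[
    d_{\Wcs}(x,y) \;\le\; d_s(x,z) + d_c(z,y) \;<\; a\, d(x,y) + b .
\]
Thus $\Wcs$ is quasi-isometric. Now $TM = \Ecs \oplus \Eu$ is a weakly partially hyperbolic splitting with continuous invariant subbundles and transverse tangent foliations $\Wcs$ and $\Wu$; its ``center'' foliation $\Wcs$ is quasi-isometric and has polynomial growth of volume by the previous step, and its ``unstable'' foliation $\Wu$ is quasi-isometric by hypothesis. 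Applying the weakly partially hyperbolic corollary to this splitting gives that $\tilde M$ has polynomial growth of volume, as desired. Alternatively, one may simply combine the two preceding propositions: the quasi-isometry hypotheses give \lgps, and \lgps\ together with polynomial growth of $\Wc$ gives polynomial growth of $\tilde M$.

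The main obstacle is the first part of the argument: one has to check that the entire weakly partially hyperbolic apparatus --- polynomial bounds on rectangles, Proposition \ref{grow}, and the corollary derived from it --- carries over verbatim to pairs of subfoliations inside a fixed foliation, with every polynomial uniform across the leaves. As the text asserts, these adaptations require almost no change; the remaining steps --- passing from \lgps\ inside $\Wcs$ to quasi-isometry of $\Wcs$, and checking that $\Ecs \oplus \Eu$ fits the weakly partially hyperbolic framework --- are routine.
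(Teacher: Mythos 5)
Your proposal is correct and follows the route the paper intends: the paper gives no explicit proof beyond the remark that this corollary follows from its weakly partially hyperbolic counterpart, and your closing observation (quasi-isometry of the three foliations gives linear \gps, which together with polynomial growth of $\Wc$ gives polynomial growth of $\tilde M$) is exactly the composition of the two preceding propositions that the paper has in mind. Your longer unpacking---establishing polynomial growth and quasi-isometry of $\Wcs$ and then applying the weakly partially hyperbolic corollary to the splitting $\Ecs \oplus \Eu$---is the same mechanism by which the paper's strongly partially hyperbolic propositions are themselves derived from the weak case.
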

Note that one-dimensional foliations always have polynomial growth of volume.
One-dimensional center foliations are by far the most commonly studied case.

We now prove the last claim of the introduction.

\begin{proof}
    [Proof of Theorem \ref{qithree}.]
    By classical Bieberbach theory, any diffeomorphism on a finite quotient of
    the 3-torus lifts to a diffeomorphism on the 3-torus
    \cite{ha2002classification}.
    For partial hyperbolicity on the 3-torus, it is already known that the
    invariant foliations exist and are quasi-isometric
    \cite{BBI2}\cite{ham-thesis}.

    For the other direction, apply Corollary \ref{phbound} to see that the
    fundamental group has polynomial growth.
    Parwani showed that in this case, $M$ is finitely covered by a circle
    bundle over a torus \cite{parwani}.  For partially hyperbolic systems on
    such manifolds, the center foliations have been completely classified
    \cite{ham-thesis}\cite{ham-nil}.  The center foliation is quasi-isometric if
    and only if the circle bundle is the 3-torus.
\end{proof}
%


\medskip 

{\sc\small
IMPA,
Estrada Dona Castorina 110,
Rio de Janeiro / Brasil\ \  22460-320 }

\bibliographystyle{plain}
\bibliography{dynamics}

\end{document}